\documentclass[11pt]{amsart}

\usepackage{amsmath,amssymb,amsthm}

\newtheorem{theorem}{Theorem}
\newtheorem{lemma}{Lemma}
\newtheorem{corollary}[theorem]{Corollary}
\theoremstyle{remark}
\newtheorem{remark}{Remark}

\newcommand{\Z}{\mathbb Z}
\newcommand{\qbinom}[3][]{\genfrac{[}{]}{0pt}{}{#2}{#3}_{#1}}
\newcommand{\sieve}[2]{\mathcal S_{#1,#2}}

\title{Solutions for Hecke Sum Questions of Banerjee and Bringmann}

\author{George E. Andrews}
\address{The Pennsylvania State University, University Park, Pennsylvania 16802}
\email{andrews@math.psu.edu}

\author{Mohamed El Bachraoui}
\address{Dept. Math. Sci,
United Arab Emirates University, PO Box 15551, Al-Ain, UAE}
\email{melbachraoui@uaeu.ac.ae}

\thanks{First author partially supported by Simons Foundation Grant 633284.}
\date{May 2026}

\begin{document}

\begin{abstract}
The present authors introduced a two-color partition series $S(q)$ and conjectured a Hecke-type formula for the even part of $(q^4;q^4)_\infty S(q)$.  Banerjee and Bringmann proved the conjecture by using indefinite theta functions, modular completions, and Sturm's theorem.  They also asked whether a direct proof, for instance one based on Bailey-type ideas, could be found, and they suggested that the odd residue classes may be worth studying.  We prove a two-variable refinement with an additional parameter $a$.
 Our proof relies entirely on $q$-series combined with the Bailey pairs
The original even identity and the odd identity then follow as corollaries by letting $a=1$.
We also record parameter symmetries and cyclotomic companions, including a vanishing result at $a=i$.
\end{abstract}

\subjclass[2020]{05A17, 11P81, 11P83, 33D15}
\keywords{$q$-series, Bailey pairs, Bailey transform, two-color partitions, Hecke-type sums, finite theta sums, finite pentagonal identities}

\maketitle

\section{Introduction}

We use the standard $q$-series notation as for instance in~\cite[Chapter 1]{GasperRahman}
\[
(a;q)_0:=1,\qquad (a;q)_n:=\prod_{j=0}^{n-1}(1-aq^j),\qquad
(a;q)_\infty:=\prod_{j=0}^{\infty}(1-aq^j),
\]
and
\[
(a_1,\ldots,a_r;q)_n:=(a_1;q)_n\cdots(a_r;q)_n.
\]
For a series $F(q)=\sum_{n\ge0} A(n)q^n$, put
\[
F(q)\big|\sieve{N}{r}:=\sum_{n\equiv r\,({\rm mod}\,N)} A(n)q^n.
\]

The present authors studied the series
\[
S(q):=\sum_{m\ge0}\frac{(q;q^2)_{m}^2q^{2m}}{(-q^2;q^2)_{m}}
\]
in connection with two-color partitions with odd smallest part \cite{AndrewsBachraoui}.  The present authors conjectured a Hecke-type formula for the even part of $(q^4;q^4)_\infty S(q)$.  Banerjee and Bringmann proved this conjecture by completing both sides to modular objects and then using Sturm's theorem \cite{BanerjeeBringmann}.  At the end of their paper they asked whether a more direct proof could be found, for example by using Bailey pairs, and they also suggested studying the odd residue classes.

The main result is a two-variable refinement.  Let $a$ be an indeterminate, or a nonzero complex number, and define
\begin{equation}\label{Sa-def}
S_a(q):=\sum_{m\ge0}\frac{(aq;q^2)_m(q/a;q^2)_m q^{2m}}{(-q^2;q^2)_m},
\qquad
P_a(q):=(q^4;q^4)_\infty S_a(q).
\end{equation}
Thus $S_1(q)=S(q)$.

The following theorem gives the two parity parts of $P_a(q)$.

\begin{theorem}\label{thm:twovar}
For $|q|<1$,
\begin{equation}\label{twovar-even}
P_a(q)\big|\sieve{2}{0}
=
\sum_{n\ge0}(-1)^nq^{6n^2+4n}(1+q^{4n+2})
\sum_{j=-n}^{n}(-1)^j a^{2j}q^{-2j^2},
\end{equation}
and
\begin{equation}\label{twovar-odd}
P_a(q)\big|\sieve{2}{1}
=
\sum_{n\ge0}(-1)^{n+1}q^{6n^2+10n+3}(1+q^{4n+4})
\sum_{j=0}^{n}(-1)^j
\big(a^{2j+1}+a^{-2j-1}\big)q^{-2j(j+1)}.
\end{equation}
\end{theorem}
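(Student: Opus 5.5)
We will prove both parts of the theorem at once by writing $S_a(q)$ as a Bailey-type sum and then splitting by the parity of the exponent of $q$.

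\emph{Step 1: a Bailey pair.} Put $x=a^2$. The summand of $S_a(q)$ is
\[
\frac{(aq;q^2)_m(q/a;q^2)_m q^{2m}}{(-q^2;q^2)_m}.
\]
In base $q^2$, the factors $(aq;q^2)_m(q/a;q^2)_m$ are exactly those of a limiting case of the Bailey lemma with parameters $\rho_1=aq$, $\rho_2=q/a$. Since $\rho_1\rho_2=q^2$, the Bailey lemma applies with $\alpha$-parameter equal to $1$ in base $q^2$. This suggests the following route.

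First, expand
\[
\frac{q^{2m}}{(-q^2;q^2)_m}
\]
as $\beta_m$ of a Bailey pair relative to $1$ in base $q^2$. The standard choice is the pair with $\beta_m=1/(-q^2;q^2)_m$, or its $q^{2m}$-weighted relative; these appear in Slater's list (e.g.\ pairs with $\alpha_n$ involving $(-1)^nq^{n^2}$ or $q^{n^2}(1+q^{2n})$-type terms). We determine the correct $\alpha_n$ by inverting
\[
\beta_m=\sum_{r}\frac{\alpha_r}{(q^2;q^2)_{m-r}(q^2;q^2)_{m+r}}.
\]

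Next, apply Bailey's lemma with $\rho_1=aq$, $\rho_2=q/a$ and $N\to\infty$. The sum $\sum_m(\rho_1)_m(\rho_2)_m(q^2/\rho_1\rho_2)^m\beta_m$ has $(q^2/\rho_1\rho_2)^m=1$, which is degenerate. So instead of the full lemma we use the bilateral (Bailey transform) form with the conjugate pair
\[
\gamma_n=\sum_{m\ge n}\frac{(aq,q/a;q^2)_m\,\delta_m}{(q^2;q^2)_{m-n}(q^2;q^2)_{m+n}},
\]
evaluated by $q$-Gauss or $q$-Pfaff--Saalsch\"utz. This gives
\[
S_a(q)=\frac{1}{(q^4;q^4)_\infty}\sum_{n}\alpha_n\,c_n(a),
\]
where $c_n(a)$ comes from $(aq,q/a;q^2)_n/(q^2;q^2)_{2n}$-type factors. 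Those factors in turn expand as finite theta sums $\sum_{|j|\le n}(-1)^ja^{2j}q^{-2j^2}$, via finite pentagonal/Jacobi triple product identities (the $q$-binomial theorem for $(aq;q^2)_n(q/a;q^2)_n$ gives $\sum_j \qbinom[q^2]{2n}{n+j}(-a)^jq^{j^2}$). The prefactor $(q^4;q^4)_\infty$ is exactly what clears the infinite product, which is why $P_a$ is the natural object.

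\emph{Step 2: double sum.} This yields $P_a(q)$ as a Hecke-type double sum
\[
\sum_{n}\sum_{j}(-1)^{n+j}a^{j}q^{Q(n,j)}
\]
with $Q$ an indefinite quadratic form. The range of $j$ is truncated by $n$ because of the $q$-binomial support.

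\emph{Step 3: parity split.} Split according to the parity of $j$, since $a^j$ with $j$ even versus odd corresponds to the two residue classes mod $2$ of the $q$-exponent; the factor $q^{j^2}$ has $j^2\equiv j \pmod 2$. Then reindex: $j\to 2j$ gives the $a^{2j}$ sum in \eqref{twovar-even}, and $j\to 2j+1$ paired with $j\to-2j-1$ gives $a^{2j+1}+a^{-2j-1}$ in \eqref{twovar-odd}. Finally, combine the $n$ and $n\to -n-1$ halves of the outer sum to produce the factors $(1+q^{4n+2})$ and $(1+q^{4n+4})$.

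\emph{Main obstacle.} The hard step is Step 1, finding the exact Bailey pair and the finite evaluation. The degenerate choice $\rho_1\rho_2=q^2$ means the usual lemma gives no extra factor. If that fails, the first fallback is to work with the finite truncation
\[
\sum_{m\le N}\frac{(aq,q/a;q^2)_m q^{2m}}{(-q^2;q^2)_m(q^2;q^2)_{N-m}(q^4;q^4)_{N}}\cdots,
\]
prove a polynomial identity by recurrence in $N$ (checking both sides satisfy the same first-order recurrence in $N$ with coefficients in $a$), and then let $N\to\infty$.

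The exponents $6n^2+4n-2j^2$ should be checked against small coefficients as a sanity test of the reindexing.
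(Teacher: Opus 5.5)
There is a genuine gap, and it sits at the step you flag as the main obstacle; the tool you propose for it does not work. You take the Bailey pair from $q^{2m}/(-q^2;q^2)_m$ (or $1/(-q^2;q^2)_m$), push $(aq,q/a;q^2)_m$ into the conjugate side, and plan to sum $\gamma_n$ by $q$-Gauss or $q$-Pfaff--Saalsch\"utz.

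\begin{itemize}
\item If $\delta_m$ carries no $q^{2m}$, your series for $\gamma_n$ diverges, since its terms tend to $(aq,q/a;q^2)_\infty/(q^2;q^2)_\infty^2\neq0$.
\item If it does carry $q^{2m}$, then after $m=n+k$ you get ${}_2\phi_1(aq^{2n+1},q^{2n+1}/a;q^{4n+2};q^2,q^2)$, whose argument is $q^2$ while $C/(AB)=1$. This is the degeneracy $\rho_1\rho_2=q^2$ you noticed. So $q$-Gauss does not apply, Saalsch\"utz is irrelevant because nothing terminates, and Heine's transformation only turns it into a Lambert-type series $\sum_k B^k/(1-Ap^k)$, not a product.
\end{itemize}

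You also never determine the $\alpha_n$ for your $\beta_m$, so the Hecke-type double sum of Step~2 is asserted rather than derived. Two later claims do not match any computation. First, the finite theta sums cannot come from expanding $(aq,q/a;q^2)_n/(q^2;q^2)_{2n}$: that expansion gives $q^{+j^2}$ with $q$-binomial weights, not $q^{-2j^2}$. Second, the factors $(1+q^{4n+2})$ and $(1+q^{4n+4})$ are not shown to arise from folding $n\to-n-1$.

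The missing idea is to split the other way. The finite Jacobi identity you wrote, $(aq;q^2)_N(q/a;q^2)_N=\sum_h(-1)^ha^hq^{h^2}\qbinom[q^2]{2N}{N+h}$, is itself the Bailey pair: $\beta_N=(aq,q/a;q^2)_N/(q^2;q^2)_{2N}$ with $\alpha_0=1$ and $\alpha_R=(-1)^Rq^{R^2}(a^R+a^{-R})$. Everything else goes into $\delta_N=(q^4;q^4)_\infty q^{2N}(q^2;q^2)_{2N}/(-q^2;q^2)_N$. The real work is then the evaluation
\[
\gamma_R=q^{2R}(q^2;q^2)_\infty\sum_{k\ge0}\frac{q^{2k}(q^{4R+2k+2};q^2)_k(-q^{2R+2k+2};q^2)_\infty}{(q^2;q^2)_k}
=q^{2R}\sum_{s\ge0}(-1)^sq^{6s^2+(6R+4)s}\bigl(1+q^{2R+4s+2}\bigr),
\]
which no classical summation gives.

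The paper handles it as follows. It expands in $z=q^{2R}$ and finds that the coefficient of $z^N$ is $q^{N(N+1)}D_N(q^2)$, where $D_N$ is a finite pentagonal sum. It then shows, via the recurrence $D_N(q)=-q^{2-N}D_{N-3}(q)$, that $D_N$ vanishes for $N\equiv2\pmod 3$ and is a signed monomial otherwise. The two surviving classes $N\equiv0,1\pmod3$ produce the factor $(1+\cdots)$.

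After that the rest is routine. One has $P_a(q)=\sum_{h\in\Z}(-1)^ha^hq^{h^2}\gamma_{|h|}$. Because $\gamma_R$ contains only even powers of $q$, the parity of the $q$-exponent equals the parity of $h$. The substitution $n=r+s$ then yields the finite theta sums with $q^{-2j^2}$ and $q^{-2j(j+1)}$.
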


We first record the basic symmetries in the parameter.

\begin{remark}\label{rem:symmetries}
From \eqref{Sa-def},
\[
S_{a^{-1}}(q)=S_a(q),\qquad P_{a^{-1}}(q)=P_a(q),
\]
and
\[
S_{-a}(q)=S_a(-q),\qquad P_{-a}(q)=P_a(-q).
\]
Thus replacing $a$ by $-a$ leaves the even part unchanged and changes the sign of the odd part.  The specializations $a$ and $a^{-1}$ are identical.
\end{remark}

Putting $a=1$ gives the two identities originally wanted for $S(q)$.  The first is the even Hecke-type identity.

\begin{corollary}\label{cor:even}
For $|q|<1$,
\begin{equation}\label{even-main}
(q^4;q^4)_\infty S(q)\big|\sieve{2}{0}
=
\sum_{n\ge0}(-1)^nq^{6n^2+4n}(1+q^{4n+2})
\sum_{|j|\le n}(-1)^jq^{-2j^2}.
\end{equation}
\end{corollary}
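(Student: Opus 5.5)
The corollary should follow immediately from Theorem~\ref{thm:twovar}, so the plan is simply to specialize the parameter. By \eqref{Sa-def}, putting $a=1$ gives
\[
(aq;q^2)_m(q/a;q^2)_m=(q;q^2)_m^2 .
\]
Hence $S_1(q)=S(q)$ termwise and $P_1(q)=(q^4;q^4)_\infty S(q)$.

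The sieve operator $\big|\sieve{2}{0}$ acts on coefficients of powers of $q$. Specializing $a=1$ before or after applying it gives the same result, provided we regard $P_a(q)$ as a power series in $q$ whose coefficients are Laurent polynomials in $a$. This holds because the $m$-th summand of $S_a(q)$ is $O(q^{2m})$, and each coefficient involves only finitely many terms. Therefore
\[
P_1(q)\big|\sieve{2}{0}=\bigl(P_a(q)\big|\sieve{2}{0}\bigr)\big|_{a=1}.
\]

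Substituting $a=1$ into the right side of \eqref{twovar-even} turns $a^{2j}$ into $1$. The inner sum over $-n\le j\le n$ is exactly the sum over $|j|\le n$ in \eqref{even-main}, which gives the identity.

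The only point needing care is convergence when we specialize. For fixed $|q|<1$, the right side of \eqref{twovar-even} converges absolutely and locally uniformly in $a$. Indeed, $q^{6n^2-2j^2}$ with $|j|\le n$ is dominated by $|q|^{4n^2}$, while $|a|^{2j}\le \max(|a|,|a|^{-1})^{2n}$, so the sum is controlled by $\sum_n (2n+1)R^{2n}|q|^{4n^2}$ with $R=\max(|a|,|a|^{-1})$. Thus evaluation at $a=1$ is legitimate. There is no real obstacle: the corollary is a direct specialization of Theorem~\ref{thm:twovar}.
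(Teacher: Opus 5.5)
Your proposal is correct and matches the paper's proof, which simply puts $a=1$ in Theorem~\ref{thm:twovar}, so that $S_1(q)=S(q)$ and the inner weight $(-1)^ja^{2j}$ becomes $(-1)^j$. Your remarks on commuting the sieve with the specialization and on convergence in $a$ are sound but not needed: the theorem is stated for any nonzero complex $a$, so $a=1$ is a direct instance.
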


The second is the corresponding odd identity.

\begin{corollary}\label{cor:odd}
For $|q|<1$,
\begin{equation}\label{odd-main}
(q^4;q^4)_\infty S(q)\big|\sieve{2}{1}
=
2\sum_{n\ge0}(-1)^{n+1}q^{6n^2+10n+3}(1+q^{4n+4})
\sum_{j=0}^{n}(-1)^jq^{-2j(j+1)}.
\end{equation}
\end{corollary}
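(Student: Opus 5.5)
The statement immediately above is Corollary~\ref{cor:odd}. It follows from \eqref{twovar-odd} in Theorem~\ref{thm:twovar} by setting $a=1$, so the plan is really a plan for the odd half of the theorem. The specialization itself is trivial: at $a=1$ we have $a^{2j+1}+a^{-2j-1}=2$, which gives the factor $2$ in \eqref{odd-main}. All the work is in proving the two-variable identities, and I would prove both together.

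\textbf{Step 1: turn $S_a$ into a Bailey-lemma sum.} I would write $x=q^2$ and view $S_a(q)$ as the series $\sum_m (\rho_1,\rho_2;x)_m (x/\rho_1\rho_2)^m/(x/\rho_1,x/\rho_2;x)_m\,\beta_m$, with $\rho_1=aq$ and $\rho_2=q/a$. This choice gives $x/(\rho_1\rho_2)=1$, which is awkward, so I would instead use the Bailey-lemma variant with base $q^2$ in which only $(-q^2;q^2)_m$ sits in the denominator. The plan is to match $1/(-q^2;q^2)_m$ to a known Bailey pair relative to $1$ or $q^2$. A natural choice is the pair with $\beta_m=1/(q^2;q^2)_m(-q^2;q^2)_m=1/(q^4;q^4)_m$; its $\alpha_n$ is essentially $(-1)^n q^{2n^2}(1+q^{2n})$ up to normalization (Slater-type).

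\textbf{Step 2: evaluate the Bailey sum.} Inserting $\rho_1=aq$ and $\rho_2=q/a$ into the Bailey transform gives an identity of the form
\[
\frac{(x/\rho_1,x/\rho_2;x)_\infty}{(x,x/\rho_1\rho_2;x)_\infty}\sum_n \frac{(\rho_1,\rho_2;x)_n (x/\rho_1\rho_2)^n}{(x/\rho_1,x/\rho_2;x)_n}\alpha_n .
\]
Here $(aq;q^2)_n/(q/a;q^2)_{n}$-type ratios appear, and these must be expanded. The key device is a finite partial-fraction or finite theta identity:
\[
\frac{(aq,q/a;q^2)_n}{(q^2;q^2)_{2n}} \quad\text{or}\quad \frac{(aq;q^2)_n}{(q/a;q^2)_{n+1}}
\]
can be written as a finite sum $\sum_{|j|\le n}(-1)^j a^{2j}q^{-2j^2}\cdot(\ldots)$. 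Concretely, I would use the $q$-binomial expansion of $(aq,q/a;q^2)_n$ as $\sum_j (-1)^j a^{j} q^{j^2}\qbinom[q^2]{2n}{n+j}$, which is the finite Jacobi triple product. Substituting it and interchanging sums yields a Hecke-type double sum whose inner sum over $j$ is truncated at $|j|\le n$. Splitting $j$ by parity then separates the powers of $q$ by parity: even $j$ lands in $\sieve{2}{0}$ with $a^{2j}$, and odd $j=2j'+1$ lands in $\sieve{2}{1}$ with $a^{2j'+1}$. Combined with the symmetry $a\to a^{-1}$ of Remark~\ref{rem:symmetries}, this produces the $a^{2j+1}+a^{-2j-1}$ pairing.

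\textbf{Step 3: bookkeeping, and the main obstacle.} After the parity split I would shift the outer index, $n\mapsto n$ for the even part and a half-step shift for the odd part. The aim is to reproduce the exponents $6n^2+4n$ and $6n^2+10n+3$ together with the factors $(1+q^{4n+2})$ and $(1+q^{4n+4})$; the latter come from the two terms of $\alpha_n$.

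The hard step is finding a Bailey pair and transform for which the factor $(q^4;q^4)_\infty$ cancels exactly and the truncation $|j|\le n$ emerges with the indefinite quadratic form $6n^2-2j^2$. If the direct pair does not fit, my first fallback is a conjugate Bailey pair or change of base. That means inserting the pair relative to $a=1$ into the Bailey lemma with $\rho_1=aq$, $\rho_2=q/a$, $x=q^2$, passing to the limit carefully, and then applying the Andrews "half-quadratic" trick that converts $\sum_n q^{Q(n)}\sum_{|j|\le n}$ into the indefinite theta form.
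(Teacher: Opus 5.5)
Your first paragraph is exactly the paper's proof of this corollary: the paper puts $a=1$ in \eqref{twovar-odd}, where $a^{2j+1}+a^{-2j-1}=2$. If Theorem~\ref{thm:twovar} may be cited, nothing more is needed. But you explicitly rest the argument on your own proof of Theorem~\ref{thm:twovar}, and that plan has a genuine gap.

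Step~1 cannot work as stated. With $x=q^2$, $\rho_1=aq$, $\rho_2=q/a$ relative to $1$, Bailey's lemma degenerates, because its prefactor has $(x/\rho_1\rho_2;x)_\infty=(1;q^2)_\infty=0$ in the denominator. Also, the pair $\beta_m=1/(q^4;q^4)_m$ would produce $\sum_m(aq,q/a;q^2)_m/(q^4;q^4)_m$, not $S_a(q)$. Its $\alpha_n$ is $2(-1)^nq^{2n^2}$ for $n\ge1$, not $(-1)^nq^{2n^2}(1+q^{2n})$.

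The useful idea in Step~2 is expanding $(aq,q/a;q^2)_N$ by the finite Jacobi triple product. That is precisely the paper's Bailey pair $\beta_N(a)=(aq,q/a;q^2)_N/(q^2;q^2)_{2N}$, $\alpha_R(a)=(-1)^Rq^{R^2}(a^R+a^{-R})$. However, substituting it and interchanging does not yield a Hecke-type sum. It yields $P_a(q)=\sum_{h\in\Z}(-1)^ha^hq^{h^2}\gamma_{|h|}$ with
\[
\gamma_R=(q^4;q^4)_\infty\sum_{N\ge R}\frac{q^{2N}(q^2;q^2)_{2N}}{(-q^2;q^2)_N\,(q^2;q^2)_{N-R}\,(q^2;q^2)_{N+R}},
\]
which is an infinite sum over $N$, and the whole proof hinges on evaluating it in closed form. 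Nothing in your proposal does this. Your fallback, ``a conjugate Bailey pair'', names the right kind of object but supplies neither the pair nor its evaluation.

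This evaluation is the ingredient you are missing. Write $N=R+k$ and use $(q^4;q^4)_\infty/(-q^2;q^2)_{R+k}=(q^2;q^2)_\infty(-q^{2R+2k+2};q^2)_\infty$. Then $\gamma_R$ becomes $q^{2R}$ times the left side of Lemma~\ref{lem:tail} with $q\to q^2$ and $z=q^{2R}$. Expanding in powers of $z$ (finite $q$-binomial theorem and Euler), the coefficient of $z^N$ is $q^{N(N+1)}D_N(q^2)$. The finite pentagonal identity of Lemma~\ref{lem:pentagonal} kills the classes $N\equiv2\pmod 3$ and turns the rest into monomials, giving
\[
\gamma_R=q^{2R}\sum_{s\ge0}(-1)^sq^{6s^2+(6R+4)s}\bigl(1+q^{2R+4s+2}\bigr).
\]
Only after this do the substitutions $h=\pm(2r+1)$ and $n=r+s$ produce the truncation $0\le r\le n$, the exponent $6n^2+10n+3-2r(r+1)$, and the factor $1+q^{4n+4}$.

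Your attribution in Step~3 is therefore backwards. The factors $(1+q^{4n+2})$ and $(1+q^{4n+4})$ come from the two surviving residue classes $N\equiv0,1\pmod 3$ on the conjugate side. The $\alpha$ side contributes only the weight $(-1)^ha^hq^{h^2}$; its pairing $h=\pm(2r+1)$ gives $a^{2r+1}+a^{-2r-1}$, and hence the factor $2$ at $a=1$.
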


The odd identity separates the two odd residue classes modulo $4$.

\begin{corollary}\label{cor:residue}
We have
\[
(q^4;q^4)_\infty S(q)\big|\sieve{4}{1}=0,
\qquad
(q^4;q^4)_\infty S(q)\big|\sieve{4}{3}
=(q^4;q^4)_\infty S(q)\big|\sieve{2}{1}.
\]
Equivalently, if
\[
S_{r,N}:=S(q)\big|\sieve{N}{r},
\]
then
\begin{equation}\label{S14-S34}
S_{1,4}=0,
\qquad
S_{3,4}=S_{1,2}.
\end{equation}
\end{corollary}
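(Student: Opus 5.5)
The plan is to read the residue information directly off the right-hand side of Corollary~\ref{cor:odd}, and then to transfer the result from $(q^4;q^4)_\infty S(q)$ to $S(q)$ itself. The transfer works because multiplication by a power series in $q^4$ commutes with sieving modulo $4$.

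\emph{Step 1: exponents modulo $4$.} Every monomial on the right of \eqref{odd-main} has exponent of the form
\[
6n^2+10n+3-2j(j+1)\qquad\text{or}\qquad 6n^2+10n+3-2j(j+1)+4n+4 .
\]
We have $6n^2+10n+3\equiv 2n^2+2n+3=2n(n+1)+3 \pmod 4$. Since $n(n+1)$ is even, $2n(n+1)\equiv 0\pmod 4$, and so $6n^2+10n+3\equiv 3\pmod 4$. Likewise $2j(j+1)\equiv 0\pmod 4$, and $4n+4\equiv 0\pmod 4$. Hence every exponent is $\equiv 3\pmod 4$. Because $(q^4;q^4)_\infty S(q)\big|\sieve{2}{1}$ is the sum of its $\sieve{4}{1}$ and $\sieve{4}{3}$ parts, this gives
\[
(q^4;q^4)_\infty S(q)\big|\sieve{4}{1}=0,
\qquad
(q^4;q^4)_\infty S(q)\big|\sieve{4}{3}=(q^4;q^4)_\infty S(q)\big|\sieve{2}{1}.
\]
There is no cancellation issue here: we are only asserting that the coefficients in the class $1\pmod 4$ vanish.

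\emph{Step 2: passing to $S(q)$.} If $G(q)$ is a power series in $q^4$, then $(G F)\big|\sieve{4}{r}=G\cdot\big(F\big|\sieve{4}{r}\big)$ for every $r$. Also, since $4\mid 2$ fails but $2\mid 4$ holds, every class modulo $4$ lies in a single class modulo $2$; hence the same identity holds with $\sieve{2}{r}$ in place of $\sieve{4}{r}$. Apply this with $G=(q^4;q^4)_\infty$, which is invertible as a formal power series. The first identity of Step 1 then gives $(q^4;q^4)_\infty S_{1,4}=0$, hence $S_{1,4}=0$. The second gives $(q^4;q^4)_\infty S_{3,4}=(q^4;q^4)_\infty S_{1,2}$, hence $S_{3,4}=S_{1,2}$, which is \eqref{S14-S34}. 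Equivalently, $S_{1,2}=S_{1,4}+S_{3,4}$ together with $S_{1,4}=0$ gives the second relation.

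There is no real obstacle: everything rests on Corollary~\ref{cor:odd}. The only point needing care is the congruence bookkeeping in Step 1, namely that both $n(n+1)$ and $j(j+1)$ are even.
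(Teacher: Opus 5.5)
Your proposal is correct and follows essentially the same route as the paper: read off that every exponent on the right side of \eqref{odd-main} is $\equiv 3 \pmod 4$, then transfer to $S(q)$ because $(q^4;q^4)_\infty$ is an invertible power series in $q^4$. Your remark that sieving modulo $2$ also commutes with multiplication by $(q^4;q^4)_\infty$ is right, but the cleaner justification is simply that a power series in $q^4$ is also a power series in $q^2$.
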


The specialization $a=i$ gives a companion whose odd part vanishes.

\begin{corollary}\label{cor:i}
Let $i^2=-1$, and set
\[
T(q):=\sum_{m\ge0}\frac{(-q^2;q^4)_m q^{2m}}{(-q^2;q^2)_m}.
\]
Then
\begin{equation}\label{i-vanishing}
(q^4;q^4)_\infty T(q)\big|\sieve{2}{1}=0,
\end{equation}
and
\begin{equation}\label{i-companion}
(q^4;q^4)_\infty T(q)
=
\sum_{n\ge0}(-1)^nq^{6n^2+4n}(1+q^{4n+2})
\sum_{j=-n}^{n}q^{-2j^2}.
\end{equation}
\end{corollary}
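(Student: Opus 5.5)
The plan is to obtain the corollary by specializing Theorem~\ref{thm:twovar} at $a=i$. No new $q$-series work should be needed.

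First I will identify $S_i(q)$ with $T(q)$. For each $k\ge0$,
\[
(1-iq^{2k+1})(1-q^{2k+1}/i)=(1-iq^{2k+1})(1+iq^{2k+1})=1+q^{4k+2}.
\]
Hence $(iq;q^2)_m(q/i;q^2)_m=(-q^2;q^4)_m$. Inserting this into \eqref{Sa-def} gives $S_i(q)=T(q)$, and therefore $P_i(q)=(q^4;q^4)_\infty T(q)$.

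Next I will read off the odd part from \eqref{twovar-odd}. The coefficient $a^{2j+1}+a^{-2j-1}$ becomes, at $a=i$,
\[
i^{2j+1}+i^{-2j-1}=i^{2j+1}\bigl(1+i^{-4j-2}\bigr)=i^{2j+1}(1-1)=0 .
\]
So every inner sum in \eqref{twovar-odd} vanishes, which proves \eqref{i-vanishing}. There is also a direct check: $T(q)$ and $(q^4;q^4)_\infty$ are both power series in $q^2$, so their product has no odd powers at all. This gives \eqref{i-vanishing} independently and serves as a consistency test on \eqref{twovar-odd}.

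Finally I will treat the even part. Since the odd part is zero, $P_i(q)=P_i(q)\big|\sieve{2}{0}$. In \eqref{twovar-even} we have $a^{2j}=i^{2j}=(-1)^j$, so $(-1)^ja^{2j}=1$. The right-hand side of \eqref{twovar-even} then reduces exactly to the right-hand side of \eqref{i-companion}.

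There is no real obstacle: everything is inherited from Theorem~\ref{thm:twovar}. The only step needing care is the first factor identity, namely pairing $a=i$ with $a^{-1}=-i$ so that the two Pochhammer symbols combine into $(-q^2;q^4)_m$.
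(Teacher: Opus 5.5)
Your proposal is correct and follows the paper's proof: you specialize Theorem~\ref{thm:twovar} at $a=i$, identify $S_i=T$ via $(iq;q^2)_m(-iq;q^2)_m=(-q^2;q^4)_m$, and use $i^{2j+1}+i^{-2j-1}=0$ and $(-1)^j i^{2j}=1$. Your extra remark, that $T(q)$ and $(q^4;q^4)_\infty$ are both series in $q^2$ so the odd part vanishes trivially, is a valid independent consistency check on \eqref{twovar-odd} that the paper does not state.
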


The primitive sixth-root specialization gives another finite-theta companion.

\begin{corollary}\label{cor:rho}
Let $\rho=e^{\pi i/3}$ and put
\[
C_r:=\rho^r+\rho^{-r}=2\cos(\pi r/3)\qquad(r\in\Z).
\]
Also set
\[
U(q):=\sum_{m\ge0}
\frac{(-q^3;q^6)_m q^{2m}}
{(-q;q^2)_m(-q^2;q^2)_m}.
\]
Then
\begin{equation}\label{rho-even}
(q^4;q^4)_\infty U(q)\big|\sieve{2}{0}
=
\sum_{n\ge0}(-1)^nq^{6n^2+4n}(1+q^{4n+2})
\left(1+\sum_{j=1}^{n}C_jq^{-2j^2}\right),
\end{equation}
and
\begin{equation}\label{rho-odd}
(q^4;q^4)_\infty U(q)\big|\sieve{2}{1}
=
\sum_{n\ge0}(-1)^{n+1}q^{6n^2+10n+3}(1+q^{4n+4})
\sum_{j=0}^{n}C_{j-1}q^{-2j(j+1)}.
\end{equation}
The coefficients $C_j$ are periodic of period $6$:
\[
C_0=2,\quad C_1=1,\quad C_2=-1,\quad C_3=-2,\quad
C_4=-1,\quad C_5=1.
\]
\end{corollary}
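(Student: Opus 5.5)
This corollary is a specialization of Theorem~\ref{thm:twovar} at $a=\rho=e^{\pi i/3}$. The plan has three steps: identify $S_\rho(q)$ with $U(q)$, substitute $a=\rho$ into \eqref{twovar-even} and \eqref{twovar-odd}, and simplify the resulting root-of-unity coefficients into the $C_r$.

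\emph{Identifying $U$.} Since $\rho+\rho^{-1}=1$, we have $(1-\rho x)(1-\rho^{-1}x)=1-x+x^2=(1+x^3)/(1+x)$. Apply this with $x=q^{2k+1}$ for $0\le k<m$:
\[
(\rho q;q^2)_m(q/\rho;q^2)_m=\prod_{k=0}^{m-1}\frac{1+q^{6k+3}}{1+q^{2k+1}}=\frac{(-q^3;q^6)_m}{(-q;q^2)_m}.
\]
Substituting into \eqref{Sa-def} gives $S_\rho(q)=U(q)$, and hence $P_\rho(q)=(q^4;q^4)_\infty U(q)$. The parity-sieved parts of $(q^4;q^4)_\infty U(q)$ are therefore given by \eqref{twovar-even} and \eqref{twovar-odd} with $a=\rho$. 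Theorem~\ref{thm:twovar} holds for any nonzero complex $a$, so the specialization is legitimate.

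\emph{Even part.} Put $a=\rho$ in the inner sum of \eqref{twovar-even}. We have $(-1)^j=\rho^{3j}$, so $(-1)^j\rho^{2j}=\rho^{5j}=\rho^{-j}$. The inner sum becomes $\sum_{j=-n}^{n}\rho^{-j}q^{-2j^2}$. Pairing the terms $j$ and $-j$ for $j\ge1$, and keeping the $j=0$ term on its own, gives $1+\sum_{j=1}^n(\rho^{j}+\rho^{-j})q^{-2j^2}=1+\sum_{j=1}^n C_jq^{-2j^2}$. This is \eqref{rho-even}.

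\emph{Odd part.} In \eqref{twovar-odd} the coefficient becomes $(-1)^j(\rho^{2j+1}+\rho^{-2j-1})$. Using $(-1)^j=\rho^{3j}=\rho^{-3j}$, attach $\rho^{3j}$ to the first term and $\rho^{-3j}$ to the second. This gives $\rho^{5j+1}+\rho^{-5j-1}$. Reducing exponents modulo $6$, $5j+1\equiv 1-j$, so the coefficient equals $\rho^{1-j}+\rho^{j-1}=C_{j-1}$, which yields \eqref{rho-odd}.

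Finally, the listed values $C_0,\dots,C_5$ follow from $C_r=2\cos(\pi r/3)$, and periodicity mod $6$ holds because $\rho^6=1$. There is no genuine obstacle: everything rests on Theorem~\ref{thm:twovar}. The only point requiring care is the bookkeeping of powers of $\rho$ modulo $6$ in the odd case, in particular using $(-1)^j=\rho^{\pm3j}$ with the sign chosen to match each term.
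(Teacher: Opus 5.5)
Your proposal is correct and follows essentially the same route as the paper. Both specialize Theorem~\ref{thm:twovar} at $a=\rho$, identify $S_\rho(q)=U(q)$ via $(1-\rho x)(1-\rho^{-1}x)=1-x+x^2=(1+x^3)/(1+x)$, and reduce the coefficients using $\rho^3=-1$ and $\rho^6=1$, which gives $\rho^{-j}$ in the even inner sum and $C_{j-1}$ in the odd inner sum.
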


The primitive cubic-root specialization gives the complementary cyclotomic companion.

\begin{corollary}\label{cor:omega}
Let $\omega=e^{2\pi i/3}$ and put
\[
D_r:=2\cos(\pi r/3)\qquad(r\in\Z).
\]
Also set
\[
V(q):=\sum_{m\ge0}
\frac{(q^3;q^6)_m q^{2m}}
{(q;q^2)_m(-q^2;q^2)_m}.
\]
Then
\begin{equation}\label{omega-even}
(q^4;q^4)_\infty V(q)\big|\sieve{2}{0}
=
\sum_{n\ge0}(-1)^nq^{6n^2+4n}(1+q^{4n+2})
\left(1+\sum_{j=1}^{n}D_jq^{-2j^2}\right),
\end{equation}
and
\begin{equation}\label{omega-odd}
(q^4;q^4)_\infty V(q)\big|\sieve{2}{1}
=
\sum_{n\ge0}(-1)^{n+1}q^{6n^2+10n+3}(1+q^{4n+4})
\sum_{j=0}^{n}D_{j+2}q^{-2j(j+1)}.
\end{equation}
The coefficients $D_j$ are periodic of period $6$:
\[
D_0=2,\quad D_1=1,\quad D_2=-1,\quad D_3=-2,\quad
D_4=-1,\quad D_5=1.
\]
\end{corollary}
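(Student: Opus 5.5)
The plan is to obtain this as the specialization $a=\omega$ of Theorem~\ref{thm:twovar}, exactly as Corollaries~\ref{cor:even}, \ref{cor:i} and \ref{cor:rho} arise from $a=1$, $a=i$ and $a=\rho$. There are two steps: identify $S_\omega(q)$ with $V(q)$, and then rewrite the finite theta sums in \eqref{twovar-even} and \eqref{twovar-odd} at $a=\omega$ in terms of $D_r$.

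\emph{Step 1: $S_\omega=V$.} Since $\omega^{-1}=\omega^2$ and $1+\omega+\omega^2=0$, for each factor $x=q^{2k+1}$ we have
\[
(1-\omega x)(1-\omega^{2}x)=1+x+x^2=\frac{1-x^3}{1-x}.
\]
Taking the product over $0\le k\le m-1$ gives
\[
(\omega q;q^2)_m(\omega^{-1}q;q^2)_m=\frac{(q^3;q^6)_m}{(q;q^2)_m}.
\]
Comparing with \eqref{Sa-def} shows $S_\omega(q)=V(q)$, and hence $P_\omega(q)=(q^4;q^4)_\infty V(q)$. Both sides are analytic for $|q|<1$, so Theorem~\ref{thm:twovar} applies with $a=\omega$.

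\emph{Step 2: the even part.} In \eqref{twovar-even} I pair the terms $j$ and $-j$. The $j=0$ term contributes $1$. For $j\ge1$ the pair contributes $(-1)^j(\omega^{2j}+\omega^{-2j})q^{-2j^2}$. Using $\omega^{2j}=e^{4\pi i j/3}$ and $(-1)^j\cos\theta=\cos(\theta+\pi j)$, the coefficient is
\[
2\cos\!\Big(\frac{4\pi j}{3}+\pi j\Big)=2\cos\!\Big(\frac{7\pi j}{3}\Big)=2\cos\!\Big(\frac{\pi j}{3}\Big)=D_j,
\]
which gives \eqref{omega-even}.

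\emph{Step 3: the odd part.} In \eqref{twovar-odd} the coefficient is $(-1)^j(\omega^{2j+1}+\omega^{-2j-1})$. By the same identity this equals
\[
2\cos\!\Big(\frac{2\pi(2j+1)}{3}+\pi j\Big)=2\cos\!\Big(\frac{(7j+2)\pi}{3}\Big)=2\cos\!\Big(\frac{(j+2)\pi}{3}\Big)=D_{j+2},
\]
where the middle step reduces the angle modulo $2\pi$ (since $\tfrac{6j\pi}{3}=2\pi j$). This gives \eqref{omega-odd}.

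\emph{Step 4: the table of values.} The listed $D_0,\dots,D_5$ and the period $6$ follow directly from evaluating $2\cos(\pi r/3)$.

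The only place needing care is the angle bookkeeping in Steps 2 and 3. In particular one must confirm that the shift is $j+2$ here, whereas it is $j-1$ in Corollary~\ref{cor:rho}; I would check this against small values of $j$.
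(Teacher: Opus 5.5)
Your proposal is correct and follows the paper's proof: specialize Theorem~\ref{thm:twovar} at $a=\omega$, identify $S_\omega=V$ via $(1-\omega x)(1-\omega^{-1}x)=1+x+x^2=(1-x^3)/(1-x)$, and reduce the inner coefficients to $D_j$ and $D_{j+2}$. The only difference is cosmetic: you track the coefficients as cosine angles shifted by $\pi j$, whereas the paper writes $\omega=\rho^2$ and $(-1)^j=\rho^{3j}$ with $\rho=e^{\pi i/3}$, then reduces exponents modulo $6$.
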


For reference, we recall the form of Bailey's transform that will be used; see Andrews \cite[Chapter 3]{AndrewsQSeries} and Warnaar \cite{WarnaarBailey} for background on Bailey pairs, Bailey's lemma, and conjugate Bailey pairs.  A pair $(\alpha_R,\beta_N)_{R,N\ge0}$ is a Bailey pair relative to $1$, with base $q$, if
\begin{equation}\label{bailey-pair-def}
\beta_N=
\sum_{R=0}^{N}\frac{\alpha_R}{(q;q)_{N-R}(q;q)_{N+R}}.
\end{equation}
A conjugate Bailey pair $(\gamma_R,\delta_N)_{R,N\ge0}$, with the same base, satisfies
\begin{equation}\label{conjugate-bailey-def}
\gamma_R=
\sum_{N=R}^{\infty}\frac{\delta_N}{(q;q)_{N-R}(q;q)_{N+R}}.
\end{equation}
Then Bailey's transform says that
\begin{equation}\label{bailey-transform}
\sum_{N\ge0}\beta_N\delta_N=\sum_{R\ge0}\alpha_R\gamma_R,
\end{equation}
whenever the sums converge.

\section{Two elementary lemmas}

The $q$-binomial coefficient is defined by~\cite[Definition 1]{AndrewsTheory}
\[
\qbinom[q]{M}{K}:=
\begin{cases}
\displaystyle \frac{(q;q)_M}{(q;q)_K(q;q)_{M-K}},&0\le K\le M,\\[6pt]
0,&\text{otherwise}.
\end{cases}
\]

We shall need the finite $q$-binomial theorem~\cite[Eq. (3.3.6)]{AndrewsTheory}
\begin{equation}\label{finite-qbinom}
(z;q)_m=\sum_{j=0}^m(-1)^jq^{j(j-1)/2}\qbinom[q]{m}{j}z^j
\end{equation}
and Euler's expansion~\cite[Eq. (2.2.6)]{AndrewsTheory}
\begin{equation}\label{euler-expansion}
(z;q)_\infty=\sum_{v\ge0}\frac{(-1)^vq^{v(v-1)/2}z^v}{(q;q)_v}.
\end{equation}
In the proof below, Euler's expansion will be used in the special form
\begin{equation}\label{euler-used}
(-zq^{k+1};q)_\infty
=\sum_{v\ge0}\frac{z^vq^{v(k+1)+v(v-1)/2}}{(q;q)_v}.
\end{equation}

The following lemma can be seen as a finite form of Euler's pentagonal theorem.

\begin{lemma}\label{lem:pentagonal}
For $N\ge0$, let
\begin{equation}\label{DN}
D_N(q):=
\sum_{j=0}^{\lfloor N/2\rfloor}
(-1)^jq^{j(3j+1-2N)/2}\qbinom[q]{N-j}{j}.
\end{equation}
Then
\begin{equation}\label{finite-pentagonal}
D_N(q)=
\begin{cases}
(-1)^s q^{-s(3s-1)/2},&N=3s,\\[3pt]
(-1)^s q^{-s(3s+1)/2},&N=3s+1,\\[3pt]
0,&N=3s+2.
\end{cases}
\end{equation}
\end{lemma}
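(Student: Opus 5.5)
We prove the lemma by a three-term recurrence in $N$. The closed form on the right of \eqref{finite-pentagonal} satisfies a simple recurrence, and we show that $D_N(q)$ satisfies the same one.

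\textbf{Step 1: normalization.} Set $E_N(q):=q^{\lfloor\cdot\rfloor}$-free normalization
\[
E_N:=\sum_{j\ge0}(-1)^jq^{j(3j+1)/2}q^{-jN}\qbinom[q]{N-j}{j},
\]
so that $E_N=D_N$. The factor $q^{-jN}$ depends on $N$, so we work directly with $D_N$.

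\textbf{Step 2: apply Pascal.} Use the Pascal rule
\[
\qbinom[q]{N-j}{j}=\qbinom[q]{N-1-j}{j}+q^{N-2j}\qbinom[q]{N-1-j}{j-1}.
\]
Insert this into \eqref{DN}.

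\textbf{Step 3: the first piece.} The first piece gives
\[
\sum_j(-1)^jq^{j(3j+1)/2-jN}\qbinom[q]{N-1-j}{j}.
\]
Its exponent differs from that of $D_{N-1}$ by $-j$, so it is not directly $D_{N-1}$. To handle this, apply the other Pascal rule instead:
\[
\qbinom[q]{N-j}{j}=q^{j}\qbinom[q]{N-1-j}{j}+\qbinom[q]{N-1-j}{j-1}.
\]
The first term then becomes $\sum_j(-1)^jq^{j(3j+1)/2-j(N-1)}\qbinom[q]{N-1-j}{j}=D_{N-1}$.

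\textbf{Step 4: the second piece.} The second term, after shifting $j\to j+1$, becomes
\[
-\sum_j(-1)^jq^{(j+1)(3j+4)/2-(j+1)N}\qbinom[q]{N-2-j}{j}.
\]
Compute the exponent:
\[
\frac{(j+1)(3j+4)}{2}-(j+1)N=\frac{j(3j+1)}{2}-j(N-2)+(3j+2-N-j)\cdots
\]
Simplifying, it equals $\frac{j(3j+1)}2-j(N-2)+(2-N)+j\cdot(\ldots)$. This again leaves a stray $q^{cj}$, which we try to remove by combining both Pascal rules.

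\textbf{Step 5: the main obstacle.} The stray factors $q^{\pm j}$ mean a two-term recurrence does not close. Checking small cases suggests $D_N=-q^{1-N}D_{N-3}$ instead. We have $D_0=1$, $D_1=1$, $D_2=1-q^{-1}\cdot q^{0}\cdot$… Using the formula, $D_2=1+(-1)q^{(4-4)/2}=0$, consistent. The recurrence $D_N=-q^{1-N}D_{N-3}$ is consistent with \eqref{finite-pentagonal}:
\begin{itemize}
\item for $N=3s$: $s(3s-1)/2-(s-1)(3s-4)/2=3s-2$, matching the exponent $N-1$ up to sign convention;
\item for $N=3s+1$: similarly $3s-1$.
\end{itemize}
So the target is to prove $D_N+q^{1-N}D_{N-3}=0$.

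\textbf{Step 6: proving the target.} To prove it, expand $\qbinom[q]{N-j}{j}$ twice by the two Pascal rules, down to binomials with top $N-2-j$ or $N-3-j$, and match terms after shifting $j$. Carrying out this bookkeeping is the expected hard step. If it fails, the fallback is a generating-function argument. Write $q^{-jN}$ via $x=q^{-N}$, and use the known generating function $\sum_N \qbinom[q]{N-j}{j}t^N=t^{2j}/(t;q)_{j+1}$ with $t\mapsto tq^{-j}$. This gives
\[
\sum_N D_N t^N=\sum_j(-1)^jq^{j(3j+1)/2-2j^2}t^{2j}/(tq^{-j};q)_{j+1}.
\]
This sum is then identified with a theta-quotient form via a $q$-Chu–Vandermonde or Sylvester-type identity, which yields the period-3 pattern.

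Finally, verify the initial values $D_0=1$, $D_1=1$, $D_2=0$, and induct on $N$.
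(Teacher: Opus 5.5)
\textbf{There is a genuine gap.} The proposal never proves a recurrence, and the recurrence you set as the target is false. From the closed form, the ratio $D_N/D_{N-3}$ is $-q^{2-N}$, not $-q^{1-N}$. Your own computation in Step 5 gives $3s-2$ for $N=3s$ and $3s-1$ for $N=3s+1$, which is $N-2$ in both cases; you misread it as $N-1$. Concretely, $D_3=1-q^{-1}(1+q)=-q^{-1}=-q^{-1}D_0$, whereas $-q^{1-3}D_0=-q^{-2}$. So the identity $D_N+q^{1-N}D_{N-3}=0$ that Step 6 aims at cannot be proved.

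Beyond that, Step 6 leaves the one substantive step (``carrying out this bookkeeping'') undone. The generating-function fallback is only a sketch: you name no specific $q$-Chu--Vandermonde or Sylvester-type identity that would turn $\sum_j(-1)^jq^{j(3j+1)/2-2j^2}t^{2j}/(tq^{-j};q)_{j+1}$ into the required series. That series, $\sum_{s\ge0}(-1)^sq^{-s(3s-1)/2}t^{3s}(1+q^{-s}t)$, is a one-sided partial-theta-type sum, not a theta quotient. As it stands, only $D_0,D_1,D_2$ are verified.

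\textbf{The missing idea} is how to remove the stray $q^{j}$ you correctly spotted in Steps 3--4. Do not switch Pascal rules. Instead, apply $\qbinom[q]{M}{K}=\qbinom[q]{M-1}{K}+q^{M-K}\qbinom[q]{M-1}{K-1}$ twice.

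First apply it to $\qbinom[q]{N-j}{j}$ and shift $j\to j+1$ in the second piece:
\[
D_N=\sum_j(-1)^jq^{j(3j+1-2N)/2}\qbinom[q]{N-1-j}{j}-\sum_j(-1)^jq^{j(3j+1-2N)/2}\,q^{j}\qbinom[q]{N-2-j}{j}.
\]
Next apply it to $\qbinom[q]{N-1-j}{j}$. This produces a term $\qbinom[q]{N-2-j}{j}$, which combines with the second sum into $(1-q^j)\qbinom[q]{N-2-j}{j}$. By the identity $(1-q^K)\qbinom[q]{M}{K}=(1-q^{M-K+1})\qbinom[q]{M}{K-1}$, this equals $(1-q^{N-1-2j})\qbinom[q]{N-2-j}{j-1}$. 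The piece $-q^{N-1-2j}\qbinom[q]{N-2-j}{j-1}$ exactly cancels the other term $q^{N-1-2j}\qbinom[q]{N-2-j}{j-1}$ from the second Pascal step. What remains, after $j\to j+1$, is
\[
D_N=\sum_{j\ge1}(-1)^jq^{j(3j+1-2N)/2}\qbinom[q]{N-2-j}{j-1}=-q^{2-N}D_{N-3}.
\]
Together with $D_0=D_1=1$ and $D_2=0$, this recurrence gives the lemma by induction. This is exactly the paper's argument.
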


\begin{proof}
The initial values are
\[
D_0(q)=1,\qquad D_1(q)=1,\qquad D_2(q)=0.
\]
We shall prove that, for $N\ge3$,
\begin{equation}\label{DN-recurrence}
D_N(q)=-q^{2-N}D_{N-3}(q).
\end{equation}
The formula \eqref{finite-pentagonal} follows at once by iterating this recurrence.

We use the elementary $q$-Pascal identities~\cite[Eqs. (3.3.3)-(3.3.4)]{AndrewsTheory}
\[
\qbinom[q]{M}{K}=\qbinom[q]{M-1}{K}+q^{M-K}\qbinom[q]{M-1}{K-1}
\]
and
\[
(1-q^K)\qbinom[q]{M}{K}=(1-q^{M-K+1})\qbinom[q]{M}{K-1}.
\]
Applying the first identity to $\qbinom[q]{N-j}{j}$ gives two terms.  In the
second term, the $j=0$ contribution is zero; after replacing $j$ by $j+1$, that
term becomes
\[
-\sum_j(-1)^jq^{j(3j+3-2N)/2}\qbinom[q]{N-2-j}{j}.
\]
Hence
\[
D_N(q)=
\sum_j(-1)^jq^{j(3j+1-2N)/2}\qbinom[q]{N-1-j}{j}
-
\sum_j(-1)^jq^{j(3j+3-2N)/2}\qbinom[q]{N-2-j}{j}.
\]
Apply the first $q$-Pascal identity once more to the first sum.  The part containing
$\qbinom[q]{N-2-j}{j}$ combines with the second sum and gives a factor $1-q^j$.
Using the second displayed identity and then shifting $j$ down by one, we obtain
\[
D_N(q)=
-q^{2-N}
\sum_{j=0}^{\lfloor (N-3)/2\rfloor}
(-1)^jq^{j(3j+1-2(N-3))/2}\qbinom[q]{N-3-j}{j}.
\]
The sum on the right is $D_{N-3}(q)$.  This proves \eqref{DN-recurrence}, and hence the lemma.
\end{proof}

\begin{lemma}\label{lem:tail}
For $|q|<1$,
\begin{equation}\label{tail}
\begin{aligned}
&(q;q)_\infty
\sum_{k\ge0}
\frac{q^k(z^2q^{k+1};q)_k(-zq^{k+1};q)_\infty}{(q;q)_k} \\
&\hspace{35mm}=
\sum_{s\ge0}(-1)^sz^{3s}q^{3s^2+2s}(1+zq^{2s+1}).
\end{aligned}
\end{equation}
\end{lemma}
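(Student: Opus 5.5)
Expand both $q$-products on the left of \eqref{tail} and collect by powers of $z$; the coefficient of each power should collapse to a finite pentagonal sum $D_N(q)$, which Lemma~\ref{lem:pentagonal} evaluates.

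\emph{Step 1: expansion.} By \eqref{finite-qbinom} with base $q$ and $z\mapsto z^2q^{k+1}$,
\[
(z^2q^{k+1};q)_k=\sum_{j=0}^k(-1)^jq^{j(j-1)/2+j(k+1)}\qbinom[q]{k}{j}z^{2j}.
\]
By \eqref{euler-used},
\[
(-zq^{k+1};q)_\infty=\sum_{v\ge0}\frac{z^v q^{v(k+1)+v(v-1)/2}}{(q;q)_v}.
\]
The coefficient of $z^N$ on the left of \eqref{tail} is then a double sum over $k\ge0$ and $j$ with $v=N-2j\ge0$. Its summand is
\[
\frac{(q;q)_\infty\, q^k}{(q;q)_k}\,(-1)^j q^{\binom{j}{2}+j(k+1)}\qbinom[q]{k}{j}\,
\frac{q^{v(k+1)+\binom{v}{2}}}{(q;q)_v}.
\]

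\emph{Step 2: the $k$-sum.} For fixed $j,N$, write $\qbinom[q]{k}{j}/(q;q)_k=1/\bigl((q;q)_j(q;q)_{k-j}\bigr)$ and put $k=j+m$. The $k$-sum becomes a geometric-type series
\[
\sum_{m\ge0}\frac{q^{m(1+j+v)}}{(q;q)_m}=\frac{1}{(q^{1+j+v};q)_\infty}
\]
by Euler's other identity, i.e. \eqref{euler-expansion} inverted, or the $q$-binomial theorem. Since $j+v=N-j$, multiplying by $(q;q)_\infty$ gives $(q;q)_{N-j}$. Combined with $1/\bigl((q;q)_j(q;q)_{N-2j}\bigr)$, this produces exactly $\qbinom[q]{N-j}{j}$.

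\emph{Step 3: the exponent.} The remaining power of $q$ is
\[
j+\binom{j}{2}+j(j+1)+v(j+1)+\binom{v}{2}\qquad\text{with } v=N-2j.
\]
I expect it to equal $j(3j+1-2N)/2$ plus a term depending only on $N$, presumably $N(N+1)/2$. A quick check: the $j$-independent part is $v$ at $j=0$ plus $\binom{N}{2}$, which gives $N(N+1)/2$, and the quadratic coefficient in $j$ should come out to $3/2$. So the coefficient of $z^N$ is
\[
q^{N(N+1)/2}D_N(q).
\]

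\emph{Step 4: apply Lemma~\ref{lem:pentagonal}.} For $N=3s$ this gives
\[
(-1)^s q^{9s^2/2+3s/2-3s^2/2+s/2}=(-1)^s q^{3s^2+2s}.
\]
For $N=3s+1$ it gives
\[
(-1)^s q^{(3s+1)(3s+2)/2-s(3s+1)/2}=(-1)^s q^{3s^2+4s+1},
\]
which is the $zq^{2s+1}$ term. Residue $N=3s+2$ contributes zero. This matches the right side of \eqref{tail}.

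The main obstacle is the exponent bookkeeping in Step 3, which must land exactly on the form \eqref{DN}. Interchanging the summations is justified by absolute convergence for $|q|<1$ and $z$ in a compact set, treating both sides as power series in $z$.
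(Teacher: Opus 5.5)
Your proposal is correct and is essentially the paper's proof: expand both products, sum out the free index $k=j+m$ via $\sum_m t^m/(q;q)_m=1/(t;q)_\infty$ to produce $\qbinom[q]{N-j}{j}$, identify the coefficient of $z^N$ on the left as $q^{N(N+1)/2}D_N(q)$, and apply Lemma~\ref{lem:pentagonal}. Step 3 is left tentative (you check only the constant and quadratic terms), but it closes directly: the exponent equals $(3j^2+3j+2jv+v^2+v)/2$, and substituting $v=N-2j$ gives exactly $N(N+1)/2+j(3j+1-2N)/2$, so the linear coefficient in $j$, namely $\tfrac12-N$, also matches.
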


\begin{proof}
Let the left side of \eqref{tail} be $L(z)$.  By \eqref{finite-qbinom}, with the variable there replaced by $z^2q^{k+1}$, and by Euler's expansion in the form \eqref{euler-used},
\[
(z^2q^{k+1};q)_k
=
\sum_{j=0}^k(-1)^jz^{2j}q^{j(k+1)+j(j-1)/2}\qbinom[q]{k}{j},
\]
and
\[
(-zq^{k+1};q)_\infty
=
\sum_{v\ge0}\frac{z^vq^{v(k+1)+v(v-1)/2}}{(q;q)_v}.
\]
Substitution, followed by the change of variables $k=j+\ell$, gives
\[
L(z)=
\sum_{j,v\ge0}(-1)^jz^{2j+v}
q^{(3j^2+3j+v^2+2jv+v)/2}
\qbinom[q]{j+v}{j}.
\]
Thus
\begin{equation}\label{coeff-L}
[z^N]L(z)=q^{N(N+1)/2}D_N(q).
\end{equation}
By Lemma \ref{lem:pentagonal}, the only nonzero coefficients of $L(z)$ are
\[
[z^{3s}]L(z)=(-1)^sq^{3s^2+2s},
\qquad
[z^{3s+1}]L(z)=(-1)^sq^{3s^2+4s+1}.
\]
This is exactly \eqref{tail}.
\end{proof}

\section{Proof of the two-variable theorem}

We use the following special case of MacMahon's finite form of Jacobi's triple product \cite[p.~75]{MacMahon}:
\begin{equation}\label{macmahon-finite-jacobi}
(zq;q)_M(z^{-1};q)_N
=
\sum_{j=-N}^{M}(-1)^j q^{j(j+1)/2}z^j
\qbinom[q]{M+N}{N+j}.
\end{equation}
Taking $M=N$ and $z=a/q$ in \eqref{macmahon-finite-jacobi} gives
\begin{equation}\label{finite-jacobi}
(a;q)_N(q/a;q)_N
=
\sum_{j=-N}^{N}(-1)^j a^j q^{j(j-1)/2}
\qbinom[q]{2N}{N+j}.
\end{equation}
Now apply \eqref{finite-jacobi} with $q$ replaced by $q^2$ and with $a$ there replaced by $aq$.  This gives
\begin{equation}\label{twovar-finite-jacobi}
(aq;q^2)_N(q/a;q^2)_N
=
\sum_{h=-N}^{N}(-1)^h a^h q^{h^2}
\qbinom[q^2]{2N}{N+h}.
\end{equation}
After division by $(q^2;q^2)_{2N}$ this says that
\begin{equation}\label{beta-a}
\beta_N(a):=\frac{(aq;q^2)_N(q/a;q^2)_N}{(q^2;q^2)_{2N}}
\end{equation}
is a Bailey pair relative to $1$, with $q$ replaced by $q^2$ in \eqref{bailey-pair-def}, where
\begin{equation}\label{alpha-a}
\alpha_0(a)=1,
\qquad
\alpha_R(a)=(-1)^Rq^{R^2}\big(a^R+a^{-R}\big)\quad(R\ge1).
\end{equation}

The conjugate pair used with it is obtained from
\begin{equation}\label{delta-def}
\delta_N=(q^4;q^4)_\infty q^{2N}
\frac{(q^2;q^2)_{2N}}{(-q^2;q^2)_N}.
\end{equation}
For $R\ge0$ set
\begin{align}
\gamma_R
&=\sum_{N=R}^{\infty}
\frac{\delta_N}{(q^2;q^2)_{N-R}(q^2;q^2)_{N+R}} \notag\\
&=q^{2R}(q^2;q^2)_\infty
\sum_{k\ge0}
\frac{q^{2k}(q^{4R+2k+2};q^2)_k(-q^{2R+2k+2};q^2)_\infty}
{(q^2;q^2)_k}. \label{gamma-tail-sum}
\end{align}
Here we used
\[
(q^4;q^4)_\infty=(q^2;q^2)_\infty(-q^2;q^2)_\infty.
\]
Lemma \ref{lem:tail}, with $q$ replaced by $q^2$ and $z=q^{2R}$, gives
\begin{equation}\label{gamma-eval}
\gamma_R=q^{2R}
\sum_{s\ge0}(-1)^sq^{6s^2+(6R+4)s}(1+q^{2R+4s+2}).
\end{equation}
Thus Lemma \ref{lem:tail} is the evaluation of the conjugate Bailey pair.

Since
\[
P_a(q)=\sum_{N\ge0}\beta_N(a)\delta_N,
\]
Bailey's transform \eqref{bailey-transform}, with $q$ replaced by $q^2$, gives
\begin{align}
P_a(q)
&=\sum_{R\ge0}\alpha_R(a)\gamma_R \notag\\
&=\sum_{h\in\Z}(-1)^h a^h q^{h^2}\gamma_{|h|}. \label{master-sum}
\end{align}
This is the basic identity from which both parity parts follow.

Since \eqref{gamma-eval} contains only even powers of $q$, the term $q^{h^2}\gamma_{|h|}$ has the same parity as $h$.  Therefore the even part of \eqref{master-sum} is obtained by putting $h=2r$:
\begin{equation}\label{twovar-even-first}
P_a(q)\big|\sieve{2}{0}
=
\sum_{r\in\Z}a^{2r}q^{4r^2}\gamma_{2|r|}.
\end{equation}
For $r\ge0$, \eqref{gamma-eval} gives
\[
q^{4r^2}\gamma_{2r}
=
q^{4r^2+4r}
\sum_{s\ge0}(-1)^sq^{12rs+6s^2+4s}(1+q^{4r+4s+2}).
\]
Putting $n=r+s$, this becomes
\begin{equation}\label{Er-tail}
q^{4r^2}\gamma_{2r}
=
(-1)^rq^{-2r^2}
\sum_{n\ge r}(-1)^nq^{6n^2+4n}(1+q^{4n+2})
\qquad(r\ge0).
\end{equation}
The same formula with $r$ replaced by $|r|$ holds for all integers $r$.  Substitution in \eqref{twovar-even-first} and interchange of sums give
\[
P_a(q)\big|\sieve{2}{0}
=
\sum_{n\ge0}(-1)^nq^{6n^2+4n}(1+q^{4n+2})
\sum_{|r|\le n}(-1)^r a^{2r}q^{-2r^2},
\]
which is \eqref{twovar-even}.

For the odd part, put $h=\pm(2r+1)$ in \eqref{master-sum}.  Since $(-1)^h=-1$ for odd $h$,
\begin{equation}\label{twovar-odd-first}
P_a(q)\big|\sieve{2}{1}
=
-\sum_{r\ge0}\big(a^{2r+1}+a^{-2r-1}\big)
q^{(2r+1)^2}\gamma_{2r+1}.
\end{equation}
From \eqref{gamma-eval},
\[
q^{(2r+1)^2}\gamma_{2r+1}
=
q^{4r^2+8r+3}
\sum_{s\ge0}(-1)^sq^{6s^2+(12r+10)s}
(1+q^{4r+4s+4}).
\]
Substitution into \eqref{twovar-odd-first} gives
\[
P_a(q)\big|\sieve{2}{1}
=
\sum_{r,s\ge0}(-1)^{s+1}
\big(a^{2r+1}+a^{-2r-1}\big)
q^{4r^2+8r+6s^2+(12r+10)s+3}
(1+q^{4r+4s+4}).
\]
Now put $n=r+s$.  Then
\[
4r^2+8r+6s^2+(12r+10)s
=
6n^2+10n-2r(r+1),
\]
and $(-1)^{s+1}=(-1)^{n+1}(-1)^r$.  Therefore
\[
P_a(q)\big|\sieve{2}{1}
=
\sum_{n\ge0}(-1)^{n+1}q^{6n^2+10n+3}(1+q^{4n+4})
\sum_{r=0}^{n}(-1)^r
\big(a^{2r+1}+a^{-2r-1}\big)q^{-2r(r+1)},
\]
which is \eqref{twovar-odd}.  The theorem is proved.

\begin{proof}
For Corollaries \ref{cor:even} and \ref{cor:odd}, put $a=1$ in Theorem \ref{thm:twovar}.  The even formula becomes \eqref{even-main}, and the odd formula becomes \eqref{odd-main}.
\end{proof}

\begin{proof}
For Corollary \ref{cor:residue}, every exponent on the right side of \eqref{odd-main} is congruent to $3$ modulo $4$, because
\[
3+6n^2+10n-2j(j+1)\equiv3\pmod4,
\]
and the factor $q^{4n+4}$ does not change the residue class.  Hence the $4n+1$ part of $(q^4;q^4)_\infty S(q)$ is zero, and the $4n+3$ part is its whole odd part.  Since $(q^4;q^4)_\infty$ has only powers $q^{4n}$ and has constant term $1$, the same conclusion holds for $S(q)$ itself.  This proves \eqref{S14-S34}.
\end{proof}

\begin{proof}
For Corollary \ref{cor:i}, put $a=i$ in Theorem \ref{thm:twovar}.  Then
\[
(iq;q^2)_m(-iq;q^2)_m=(-q^2;q^4)_m,
\]
so $S_i(q)=T(q)$.  In the even formula, $(-1)^j i^{2j}=1$.  In the odd formula,
\[
i^{2j+1}+i^{-2j-1}=0.
\]
This gives \eqref{i-vanishing} and \eqref{i-companion}.
\end{proof}

\begin{proof}
For Corollary \ref{cor:rho}, put $a=\rho=e^{\pi i/3}$ in Theorem \ref{thm:twovar}.  Since
$\rho+\rho^{-1}=1$,
\[
(\rho q;q^2)_m(\rho^{-1}q;q^2)_m
=\prod_{r=0}^{m-1}(1-q^{2r+1}+q^{4r+2})
=\frac{(-q^3;q^6)_m}{(-q;q^2)_m}.
\]
Thus $S_\rho(q)=U(q)$.  Also, since $\rho^3=-1$,
\[
(-1)^j\rho^{2j}=\rho^{-j}.
\]
Hence the even inner sum in \eqref{twovar-even} becomes
\[
\sum_{j=-n}^{n}\rho^{-j}q^{-2j^2}
=1+\sum_{j=1}^{n}(\rho^j+\rho^{-j})q^{-2j^2}.
\]
For the odd inner sum,
\[
(-1)^j(\rho^{2j+1}+\rho^{-2j-1})
=\rho^{1-j}+\rho^{j-1}=C_{j-1}.
\]
The two stated formulas follow.
\end{proof}

\begin{proof}
For Corollary \ref{cor:omega}, put $a=\omega=e^{2\pi i/3}$ in Theorem \ref{thm:twovar}.  Since
$\omega+\omega^{-1}=-1$,
\[
(\omega q;q^2)_m(\omega^{-1}q;q^2)_m
=\prod_{r=0}^{m-1}(1+q^{2r+1}+q^{4r+2})
=\frac{(q^3;q^6)_m}{(q;q^2)_m}.
\]
Thus $S_\omega(q)=V(q)$.  Put $\rho=e^{\pi i/3}$, so that $\omega=\rho^2$ and $(-1)^j=\rho^{3j}$.  Then
\[
(-1)^j\omega^{2j}=\rho^j.
\]
Hence the even inner sum in \eqref{twovar-even} becomes
\[
\sum_{j=-n}^{n}\rho^jq^{-2j^2}
=1+\sum_{j=1}^{n}(\rho^j+\rho^{-j})q^{-2j^2}
=1+\sum_{j=1}^{n}D_jq^{-2j^2}.
\]
For the odd inner sum,
\[
(-1)^j(\omega^{2j+1}+\omega^{-2j-1})
=\rho^{j+2}+\rho^{-j-2}=D_{j+2}.
\]
The two stated formulas follow.
\end{proof}

\section{Concluding comments}

The proof gives a direct $q$-series answer to Banerjee and Bringmann's question.  MacMahon's finite Jacobi identity supplies the Bailey pair, and Lemma \ref{lem:tail} evaluates the conjugate Bailey pair.  The Bailey transform then gives the two-variable identity in one step.  The specialization $a=1$ gives the original even Hecke-type identity and the odd identity.  The odd identity shows that one odd residue class vanishes and that the other is the whole odd part.

The parameter $a$ records the Jacobi index.  This explains why the even part contains the Laurent powers $a^{2j}$, whereas the odd part contains the paired powers $a^{2j+1}+a^{-2j-1}$.  The symmetry $P_{-a}(q)=P_a(-q)$ shows that replacing $a$ by $-a$ only changes the sign of the odd part.  The case $a=i$ is especially simple: the paired odd weight vanishes, and so the whole odd part is zero.  The specializations $a=e^{\pi i/3}$ and $a=e^{2\pi i/3}$ give two cyclotomic companions, with periodic coefficients $2\cos(\pi j/3)$ in the finite theta sums.

\section*{Declarations}


\noindent\textbf{Conflict of interest.} The authors declare that they have no conflict of interest.

\noindent\textbf{Data availability.} Data sharing is not applicable to this article as no datasets were generated or analyzed during the current study.


\begin{thebibliography}{99}

\bibitem{AndrewsTheory}
G. E. Andrews, \emph{The Theory of Partitions}, Addison-Wesley, Reading, MA, 1976; reprinted by Cambridge University Press, Cambridge, 1998.

\bibitem{AndrewsQSeries}
G. E. Andrews, \emph{$q$-Series: Their Development and Application in Analysis,
Number Theory, Combinatorics, Physics, and Computer Algebra}, CBMS Regional
Conference Series in Mathematics, vol. 66, American Mathematical Society,
Providence, RI, 1986.

\bibitem{AndrewsBachraoui}
G. E. Andrews and M. El Bachraoui, \emph{Congruences for two-color partitions with odd smallest part}, arXiv:2410.14190.

\bibitem{BanerjeeBringmann}
K. Banerjee and K. Bringmann, \emph{Proof of a conjecture of Andrews and Bachraoui on a Hecke sum}, arXiv:2605.10300v1.

\bibitem{GasperRahman}
G. Gasper and M. Rahman, \emph{Basic Hypergeometric Series}, 2nd ed., Encyclopedia of Mathematics and its Applications, vol. 96, Cambridge University Press, Cambridge, 2004.

\bibitem{MacMahon}
P. A. MacMahon, \emph{Combinatory Analysis}, Vol.~II, Cambridge University Press, Cambridge, 1916; reprinted in \emph{Combinatory Analysis, Volumes I and II}, AMS Chelsea Publishing, vol.~137, American Mathematical Society, Providence, RI, 1984.

\bibitem{WarnaarBailey}
S. O. Warnaar, \emph{50 years of Bailey's lemma}, in \emph{Algebraic
Combinatorics and Applications} (G\"ossweinstein, 1999), Springer, Berlin,
2001, pp. 333--347.

\end{thebibliography}
\end{document}